\theoremstyle{plain}
\newtheorem{thm}{Theorem}[section]
\theoremstyle{definition}
\newtheorem{ex}[thm]{Example}
\numberwithin{equation}{section}
\begin{document}
\title[Elliptic Systems]{Semilinear elliptic systems with dependence on the gradient}

\author[F. Cianciaruso]{Filomena Cianciaruso}
\address{Filomena Cianciaruso, Dipartimento di Matematica e Informatica, Universit\`{a}
della Calabria, 87036 Arcavacata di Rende, Cosenza, Italy}
\email{filomena.cianciaruso@unical.it}
\author{Paolamaria Pietramala}%
\address{Paolamaria Pietramala, Dipartimento di Matematica e Informatica, Universit\`{a}
della Calabria, 87036 Arcavacata di Rende, Cosenza, Italy}
\email{pietramala@unical.it}
\subjclass[2010]{Primary 35B07, secondary 35J57, 47H10, 34B18}
\keywords{Elliptic system, annular domain, radial solution, non-existence, cone, fixed point index.}
\begin{abstract}
We provide results on the existence, non-existence, multiplicity and localization
of positive radial solutions for semilinear
elliptic systems with Dirichlet or Robin boundary conditions on an annulus.
Our approach is topological and relies on the classical fixed
point index. We present an example to illustrate our theory.
\end{abstract}
\maketitle

\section{Introduction}
The object of this paper is to study  existence or non-existence of positive radial solutions for the system of BVPs
\begin{equation}\label{ellbvp-secapp}
\begin{cases}\
&-\Delta u =f_1(|x|,u,v,|\nabla u|,|\nabla v|)\text{ in } \Omega, \\
&-\Delta v=f_2(|x|,u,v,|\nabla u|,|\nabla v|)\text{ in } \Omega,\\
&u=0 \text{ on }\partial \Omega,\\
&v=0 \text{ on }|x|=R_0 \text{ and }\displaystyle\frac{\partial
v}{\partial r}=0 \text{ on }|x|=R_1,
\end{cases}
\end{equation}
where $\Omega=\{ x\in\mathbb{R}^n : R_0<|x|<R_1\}$ is an annulus,
$0<R_0<R_1<+\infty$, the nonlinearities $f_i$ are positive
continuous functions and $\dfrac{\partial}{\partial r}$ denotes (as
in ~\cite{nirenberg}) differentiation in the radial direction
$r=|x|$.\\
The problem of the existence of positive \emph{radial}
solutions of elliptic equations with nonlinearities with dependence on gradient subject to Dirichlet or Robin
 boundary conditions has been investigated,
via different methods, by a number of authors, we mention the
papers  \cite{ave-mo-tor, bue-er-zu-fe, defig-sa-ubi, defig-ubi, fa-mi-pe, singh}.\\

Since we are looking for radial solutions, we obtain results to a
wide class of nonlinearities by means of an approach based on
fixed point index theory and invariance properties of a
suitable cone, in which Harnack-type inequalities are used. 
Radial solutions of elliptic systems have been studied with topological  methods in \cite{bue-er-zu-fe, chen-lu, defig-sa-ubi, defig-ubi, lee, lee2, singh}, in particular the cones have been used in \cite{chen-lu, lee, lee2}.\\
The use of cones and Harnack-type inequalities in the context of ordinary differential equations or systems depending on the first derivative has been studied in \cite{aga-ore-yan, ave-graef-liu, defig-sa-ubi, guo-ge, inf, janko, yang-kong,zima}.\\
The paper is organized as follows: in Section $2$ we introduce and prove our auxiliary results, in particular the properties of a suitable  compact operator, Section $3$ contains the existence and non-existence results for the elliptic sistem and an example.   
\section{Setting and Preliminaries}

Consider  in $\mathbb{R}^n$, $n\ge 2$, the equation
\begin{equation}\label{eqell}
-\triangle w=f(|x|,w,|\nabla w|)\text{ in } \Omega,\,
\end{equation}
subject to Dirichlet boundary conditions
\begin{equation*}
w=0 \text{    on }\partial \Omega,
\end{equation*}
or Robin boundary conditions
\begin{equation*}
w=0 \text{ on }|x|=R_0 \text{ and }\displaystyle\frac{\partial
w}{\partial r}=0 \text{ on }|x|=R_1.
\end{equation*}
Since we are looking for the existence of radial solutions $w=w(r)$  of the system \eqref{ellbvp-secapp},  we rewrite \eqref{eqell} in the form
\begin{equation}\label{eqinterm}
-w''(r)-\dfrac{n-1}{r}w'(r)= f(r,w(r),|w'(r)|) \quad\text{ in }
[R_{0}, R_{1}].
\end{equation}
Set $w(t)=w(r(t))$, where, for $t\in[0,1]$, (see \cite{defig-ubi, dolo3})
\begin{equation*}
r(t):=\begin{cases}
R_1^{1-t}R_0^{t},\ &n=2,\\
\left(\frac{A}{B-t}\right)^{\frac{1}{n-2}},\ &n\geq 3,
\end{cases}
\end{equation*}
$$
A=\frac{(R_0R_1)^{n-2}}{R_1^{n-2}-R_0^{n-2}}\,\,\,\,\text{
 and  }\,\,\,\,B=\frac{R_1^{n-2}}{R_1^{n-2}-R_0^{n-2}}\,.
$$
Take, for $t\in[0,1]$,
\begin{equation*}
p(t):=\begin{cases}
r^2(t) \log^2(R_1/R_0), \ & n=2,\\
\left(\frac{R_0R_1\left(R_1^{n-2}-R_0^{n-2}\right)}{n-2}\right)^2\,\frac{1}{\left(R_1^{n-2}-(R_1^{n-2}-R_0^{n-2})t\right)^{\frac{2(n-1)}{n-2}}}\,\,,\
&n\geq 3,
\end{cases}
\end{equation*}
then the equation ~\eqref{eqinterm} becomes
\begin{equation*}
-w''(t)= p(t)
f\left(r(t),w(t),\left|\frac{w'(t)}{r'(t)}\right|\right)
:=g(t,w(t),|w'(t)|) \ \text{ on}\ [0,1].
\end{equation*}
If we set $u(t)=u(r(t))$ and $v(t)=v(r(t))$, thus to the system
\eqref{ellbvp-secapp} we can associate the system of ODEs
\begin{equation}\label{1syst}
\begin{cases}
&-u''(t) = g_1(t,u(t),v(t),|u'(t)|,|v'(t)|) \quad \text{ in } [0,1], \\
&-v''(t) =g_2(t,u(t),v(t),|u'(t)|,|v'(t)|) \quad \text{ in } [0,1],\\
& u(0)=u(1)=v(0)=v'(1)=0.
\end{cases}
\end{equation}
We note that, for $i=1,2$, $g_i$  is a positive continuous function in $[0,1]\times[0,+\infty)^4$.\\
 By a \emph{radial solution}  of the system~\eqref{ellbvp-secapp} we mean a solution of the system~\eqref{1syst}.\\
Let $\omega$ be a continuous function on $[0,1]$ and
$$
C_{\omega}^1[0,1]=\{w \in C[0,1]: w \text{ is continuous differentiable on }(0,1) \text{ with }\sup_{t \in (0,1)}\omega(t)|w'(t)|<+\infty\}.
$$
It can  verify that $C_{\omega}^1[0,1]$ is a Banach space (see \cite{aga-ore-yan} for the proof) with the norm 
$$
|| w||: =\max \left\{ || w|| _{\infty},\|w'\|_{\omega}\right\},
$$
where $|| w|| _{\infty}:=\underset{t\in [ 0,\,1]\,}{\max }|w(t)|$ and $\|w'\|_{\omega}:=\displaystyle\sup_{t\in (0,1)}\omega(t)|w'(t)|$.\\
Set $\omega_1(t)=t(1-t)$, $\omega_2(t)=t$, we  study the existence of solutions of the system~\eqref{1syst}
by means of the fixed points of the suitable operator $T$ on the space
$C_{\omega_1}^1[0,1]\times C_{\omega_2}^1[0,1]$ equipped with the norm
\begin{equation*}
|| (u,v)|| :=\max \{|| u|| ,\,|| v|| \}.
\end{equation*}

We define the integral operator $T:C_{\omega_1}^1[0,1]\times C_{\omega_2}^1[0,1]\to C_{\omega_1}^1[0,1]\times C_{\omega_2}^1[0,1]$
\begin{equation}\label{operT}
\begin{array}{c}
T(u,v)(t):=\left(
\begin{array}{c}
T_{1}(u,v)(t) \\
T_{2}(u,v)(t)
\end{array}
\right)  =\left(
\begin{array}{c}
\int_{0}^{1}k_1(t,s)g_{1}(s,u(s),v(s),|u'(s)|,|v'(s)|)\,ds \\
\int_{0}^{1}k_2(t,s)g_2(s,u(s),v(s),|u'(s)|,|v'(s)|)ds%
\end{array}
\right)
\end{array}\,,
\end{equation}
where the Green's functions $k_i$  are given by
\begin{equation*} k_1(t,s)=\begin{cases} s(1-t),&0\leq s\leq t\leq 1,\cr
t(1-s),&0\leq t\leq s\leq 1,\cr\end{cases}
\,\,\,\,\,\,\,k_2(t,s)=\begin{cases} s,&0\leq s\leq t\leq 1,\cr
t,&0 \leq t\leq s\leq 1.\cr\end{cases}\end{equation*}
\smallskip

\noindent
We resume the well known
properties of the Green's functions $k_i$.
\begin{itemize}
 \item[(1)]   The kernel $k_1$ is positive and continuous in  $[0,1]\times  [0,1]$. Moreover, for $[a_1,b_1] \subset(0,1)$, we have
\begin{align*}
k_1(t,s) \leq \phi_1(s)\ \text{ for } (t,s)\in [0,1] \times [0,1]
\text{ and }k_1(t,s) \geq c_1\, \phi_1 (s) \text{ for }(t,s)\in
[a_1,b_1]\times [0,1],
\end{align*}
where we can take
$$\phi_1(s):=\sup_{t\in [0,1]} k_1(t,s)= k_1(s,s)=s(1-s) \text
{ and } c_1:=\min\{a_1, 1-b_1\}\,.
$$
 \item[(2)]  The function $k_1(\cdot,s)$ is derivable in $\tau \in [0,1]$, with
$$
\dfrac{\partial k_1}{\partial t}(t,s)=\begin{cases} -s,&0\leq s<
t\leq 1,\cr 1-s, & 0\leq t< s\leq 1,\cr\end{cases}
$$
and for all $\tau \in [ 0,1]$ we have
\begin{equation*}
\lim_{t\rightarrow \tau }\left| \frac{\partial k_1}{\partial
t}(t,s)- \frac{\partial k_1}{\partial t}(\tau ,s)\right|
=0,\;\text{ for almost every}\,s\in [ 0,1].
\end{equation*}
The partial derivative  $\dfrac{\partial k_1}{\partial t}(t,s)<0$
for $0<s<t$, $\dfrac{\partial k_1}{\partial t}(t,s)>0$ for $t<s<1$
and
$$
 \left|\frac{\partial k_1}{\partial t}(t,s)\right|\leq \psi_1(s):=\max\{s,1-s\}\text{    for   }t\in [0,1]\text{     and almost every }\,s\in [0,1].
 $$
 \item[(3)]   The kernel $k_2$ is positive and continuous in  $[0,1]\times  [0,1]$. Moreover, for $[a_2,b_2] \subset(0,1]$, we have
\begin{align*}
k_2(t,s) \leq \phi_2(s)\ \text{ for } (t,s)\in [0,1] \times [0,1]
\text{ and }k_2(t,s) \geq c_2\, \phi_2 (s) \text{ for }(t,s)\in
[a_2,b_2]\times [0,1],
\end{align*}
where we can take
$$\phi_2(s):=\sup_{t\in [0,1]} k_2(t,s)= k_2(s,s)=s \text{ and }c_2:=a_2\,.
$$
 \item[(4)]  The function $k_2(\cdot,s)$ is derivable in $\tau \in [0,1]$, with
$$
\dfrac{\partial k_2}{\partial t}(t,s)=\begin{cases} 0,&0\leq s<
t\leq 1,\cr 1, & 0\leq t< s\leq 1,\cr\end{cases}
$$
and for all $\tau \in [ 0,1]$ we have
\begin{equation*}
\lim_{t\rightarrow \tau }\left| \frac{\partial k_2}{\partial
t}(t,s)- \frac{\partial k_2}{\partial t}(\tau ,s)\right|
=0,\;\text{ for almost every}\,s\in [ 0,1].
\end{equation*}
Moreover
$$
 \left|\frac{\partial k_2}{\partial t}(t,s)\right|\leq 1:=\psi_2(s)\text{    for   }t\in [0,1]\text{     and almost every }\,s\in [0,1].
 $$
\end{itemize}
By direct calculation  we obtain
$$
m_1:=\left(\sup_{t\in [0,1]}\int_0^1k_1(t,s)\,ds\right)^{-1}=8,
\,\,\,\,\,\, m_2:=\left(\sup_{t\in
[0,1]}\int_0^1k_2(t,s)\,ds\right)^{-1}=2,
$$
$$
M_1:=\left(\inf_{t\in[a_1,b_1]}\int_{a_1}^{b_1}k_1(t,s)ds\right)^{-1}=\begin{cases}\frac{2}{a_1(b_1-a_1)(2-a_1-b_1)},&\mbox
{ if }a_1+b_1\leq
1, \cr\\
\frac{2}{(1-b_1)(b_1^2-a_1^2)},& \mbox {if }a_1+b_1> 1,
\cr\end{cases} \,\,\,\,\,\,
$$
$$
M_2:=\left(\inf_{t\in[a_2,b_2]}\int_{a_2}^{b_2}k_2(t,s)ds\right)^{-1}=\frac{1}{a_2(b_2-a_2)}.
$$
 Fixed $[a_1,b_1] \subset(0,1)$, $c_1=\min\{a_1,1-b_1\}$, $[a_2,b_2] \subset(0,1]$ and $c_2=a_2$, we  consider the
 cones, for $i=1,2$,
\begin{equation*}
K_{i}:=\left\{ w\in C_{\omega_i}^{1}[0,\,1]:w\geq 0,\,\underset{t\in [ a_i,b_i]}%
{\min }w(t)\geq c_i || w|| _{\infty},\,\,  || w|| _{\infty}\geq \|w'\|_{\omega_i}\right\},
\end{equation*}
and the cone $K$ in $ C_{\omega_1}^{1}[0,1]\times C_{\omega_2}^{1}[0,1]$ defined by
\begin{equation*}
K:=\{(u,v)\in K_{1}\times K_{2}\}.
\end{equation*}
By a \emph{positive solution} of the system (\ref{1syst}) we mean
a solution $(u,v)\in K$ of (\ref{1syst}) such that $\|(u,v)\|> 0$.
Note that the functions in $K_i$ are strictly positive on the
sub-intervals $[a_i,b_i]$.\\

The operator $T$ defined in \eqref{operT} leaves the cone $K$ invariant. In fact, take
$(u,v)\in K$ such that $\|(u,v)\| \leq r$, $r>0$, we have, for $t\in [0,1]$,
$$
T_i(u,v)(t) \leq
\int_{0}^{1}\phi_i(s)g_i(s,u(s),v(s),|u'(s)|,|v'(s)|)ds
$$
and so, for $i=1,2$, $|| T_i(u,v)|| _{\infty}<+\infty$.\\
On the other hand, we have
\begin{equation}\label{min}
\min_{t\in [a_i,b_i]}T_i(u,v)(t) \geq
c_i\int_{0}^{1}\phi_i(s)g_i(s,u(s),v(s),|u'(s)|,|v'(s)|)\,ds \geq c_i
||T_i(u,v)|| _{\infty}.
\end{equation}
Now  we prove that for every $(u,v)\in  K$ 
\begin{equation}\label{u'}
\|T_{1}(u,v) \|_{\infty} \geq\|(T_1(u,v))'\|_{\omega_1}.
\end{equation}
We have
\begin{align*}
&t(1-t)|(T_1(u,v))'(t)|=\Big|-t(1-t)\int_0^t
sg_{1}(s,u(s),v(s),|u'(s)|,|v'(s)|)ds\\
&\,\,\,\,\,\,\,\,\,\,\,\,\,\,\,\,\,\,\,\,\,\,\,\,\,\,\,\,\,\,\,\,\,\,\,\,\,\,\,\,\,\,\,\,\,\,\,\,\,\,\,\,\,\,\,\,\,\,+t(1-t)\int_t^1
(1-s)g_{1}(s,u(s),v(s),|u'(s)|,|v'(s)|)ds\Big|\\
&\leq t(1-t)\int_0^tsg_{1}(s,u(s),v(s),|u'(s)|,|v'(s)|)ds+t(1-t)\int_t^1 (1-s)g_{1}(s,u(s),v(s),|u'(s)|,|v'(s)|)ds\\
&\leq (1-t)\int_0^tsg_{1}(s,u(s),v(s),|u'(s)|,|v'(s)|)ds+t\int_t^1 (1-s)g_{1}(s,u(s),v(s),|u'(s)|,|v'(s)|)ds\\
&=T_1(u,v)(t)\leq \|T_1(u,v)\|_{\infty}
\end{align*}
and consequently \eqref{u'} holds.

Moreover we have
\begin{align*}
t|(T_2(u,v)'(t)| &= t\int_{t}^{1}g_{2}(s,u(s),v(s),|u'(s)|,|v'(s)|)ds\le \\
&\leq \int_0^t sg_{2}(s,u(s),v(s),|u'(s)|,|v'(s)|)ds+\int_{t}^{1}tg_{2}(s,u(s),v(s),|u'(s)|,|v'(s)|)ds\\
&=T_2(u,v)(t)\leq \|T_2(u,v)\|_{\infty}
\end{align*}
 and therefore
 \begin{equation}\label{v'}
 \|T_{2}(u,v) \|_{\infty} \geq \|(T_2(u,v))'\|_{\omega_2}.
\end{equation}
 Since (\ref{min} ), (\ref{u'}) and (\ref{v'}) hold for every $r>0$, we obtain  $T_{i}K_{i}\subset K_{i}$. \\
By the properties of the Green's functions $k_i$ and using the
Arz\`{e}la-Ascoli Theorem, we obtain that the operator $T$ is
compact.

In order to use the fixed point index,  we utilize the open bounded sets (relative to $K$) , for
$\rho_1,\rho_2>0$,
\begin{equation*}
K_{\rho _{1},\rho _{2}}:=\{(u,v)\in K:|| u|| <\rho _{1}\ \text{ and }\ || v|| <\rho _{2}\}.
\end{equation*}
\begin{equation*}
 V_{\rho _{1},\rho _{2}}:=\{(u,v)\in K:
\displaystyle{\min_{t\in [a_1,b_1]}}u(t)<\rho_1 \text{ and
}\displaystyle{\min_{t\in [a_2,b_2]}}v(t)<\rho_2\}.
\end{equation*}
The sets defined above have the following properties:
\begin{itemize}
\item[$(P_1)$]$K_{\rho_1,\rho_2}\subset V_{\rho_1,\rho_2}\subset K_{\rho_1/c_1,\rho_2/c_2}$.
\item[$(P_2)$] $(w_1,w_2)\in\partial K_{\rho _{1},\rho _{2}}$ if and only if
$(w_{1},w_{2})\in K$ and for some $i\in \{1,2\}$  $\|w_i\|_{\infty}=\rho_i$ and $c_i \rho_i \le
w_i(t)\le \rho_i$ for $t\in[a_i,b_i]$.
\item[$(P_3)$] $(w_1,w_2) \in \partial V_{\rho_1,\rho_2}$ if and only if  $(w_1,w_2)\in K$ and for some $i\in \{1,2\}$ $\displaystyle
\min_{t\in [a_i,b_i]} w_i(t)= \rho_i$  and
 $\rho_i \le w_i(t) \le \rho_i/c_i$  for  $t\in[a_i,b_i]$.
\end{itemize}
The following theorem follows from classical
results about fixed point index (more details can be seen, for example, in
\cite{Amann-rev, guolak}).
\begin{thm} \label{index}Let $K$ be a cone in an ordered Banach space $X$. Let $\Omega $ be
an open bounded subset with $0 \in \Omega\cap K$ and
$\overline{\Omega \cap K}\neq K$.  Let $\Omega ^{1}$ be open in
$X$ with $\overline{\Omega ^{1}}\subset \Omega \cap K$. Let
$F:\overline{\Omega \cap K}\rightarrow K$ be a compact map.
 Suppose that
\begin{itemize}
\item[(1)]$Fx\neq \mu x$ for all $x\in\partial( \Omega \cap K)$
and for all $\mu \geq 1$.
\item[(2)] There exists $h\in K\setminus \{0\}$ such that $x\neq Fx+\lambda h$ for all $x\in \partial (\Omega^1 \cap K)$ and all $\lambda
>0$.
\end{itemize}
Then $F$ has at least one fixed point $x \in (\Omega \cap
K)\setminus\overline{(\Omega^{1}\cap K)}$.\\
Denoting by $i_K(F,U)$ the fixed point index of $F$ in some
$U\subset X$, we have $$i_{K}(F,\Omega \cap K)=1 \mbox{ and }
i_{K}(F,\Omega^{1} \cap K)=0\,.$$
 The same result holds if
$$i_{K}(F,\Omega \cap K)=0 \mbox{ and }
i_{K}(F,\Omega^{1} \cap K)=1\,.$$
\end{thm}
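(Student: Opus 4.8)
The plan is to deduce the statement from two classical index computations — one on $\Omega\cap K$ driven by hypothesis (1) and one on $\Omega^1\cap K$ driven by hypothesis (2) — together with the additivity (excision) property of the fixed point index; the fixed point then comes out of the solution property of the index (all of these are the classical properties recalled in \cite{Amann-rev, guolak}). Throughout, the compactness of $F$ and the fact that $K$ is a cone are what make each index below well defined. One records at the outset that $F$ is fixed-point free on $\partial(\Omega\cap K)$ (hypothesis (1) with $\mu=1$) and on $\partial(\Omega^1\cap K)$ (understood as part of the standing hypotheses, since otherwise $i_K(F,\Omega^1\cap K)$ would be undefined).

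For the first computation I would homotope $F$ to the zero map inside $\overline{\Omega\cap K}$ via $H(t,x)=tF(x)$, $t\in[0,1]$. This is a compact, $K$-valued homotopy, and it is fixed-point free on $\partial(\Omega\cap K)$: for $t=0$ a fixed point would be $x=0\notin\partial(\Omega\cap K)$, since $0\in\Omega\cap K$; and for $t\in(0,1]$ a fixed point $x=tF(x)$ rewrites as $F(x)=\mu x$ with $\mu=1/t\ge 1$, contradicting (1). Hence, by homotopy invariance and the normalization property (here the hypotheses $0\in\Omega\cap K$ and $\overline{\Omega\cap K}\neq K$ enter), $i_K(F,\Omega\cap K)=i_K(0,\Omega\cap K)=1$.

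For the second computation I would use the $h\in K\setminus\{0\}$ provided by (2) and the homotopy $H(\lambda,x)=F(x)+\lambda h$ on $\overline{\Omega^1\cap K}$ for $\lambda\in[0,\Lambda]$. It is compact and $K$-valued, and it has no fixed point on $\partial(\Omega^1\cap K)$ for any $\lambda\in[0,\Lambda]$ (using (2) for $\lambda>0$ and the standing assumption for $\lambda=0$), so $i_K(F,\Omega^1\cap K)=i_K(F+\Lambda h,\Omega^1\cap K)$ for every $\Lambda>0$. Since $\overline{\Omega^1}\subset\Omega\cap K\subset\Omega$ with $\Omega$ bounded, the set $\overline{\Omega^1\cap K}$ is bounded, and $F(\overline{\Omega^1\cap K})$ is bounded by compactness of $F$; choosing $\Lambda$ so large that $\Lambda\|h\|$ exceeds $\sup\{\|x\|+\|F(x)\|:x\in\overline{\Omega^1\cap K}\}$, the equation $x=F(x)+\Lambda h$ has no solution in $\overline{\Omega^1\cap K}$, so $i_K(F+\Lambda h,\Omega^1\cap K)=0$ and therefore $i_K(F,\Omega^1\cap K)=0$. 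I expect this selection of $\Lambda$ — i.e. confirming that $\overline{\Omega^1\cap K}$ and its image under $F$ are bounded and that $\|h\|>0$ — to be the only step requiring a little care; everything else is formal manipulation of $i_K$.

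Finally, by additivity of the index applied to the disjoint relatively open subsets $\Omega^1\cap K$ and $(\Omega\cap K)\setminus\overline{\Omega^1\cap K}$ of $\Omega\cap K$ (legitimate because $F$ has no fixed points on $\partial(\Omega\cap K)$ or on $\partial(\Omega^1\cap K)$),
\begin{equation*}
i_K\bigl(F,(\Omega\cap K)\setminus\overline{\Omega^1\cap K}\bigr)=i_K(F,\Omega\cap K)-i_K(F,\Omega^1\cap K)=1-0=1\neq 0,
\end{equation*}
so the solution property of the index yields a fixed point $x\in(\Omega\cap K)\setminus\overline{\Omega^1\cap K}$. The twin statement — hypotheses arranged so that $i_K(F,\Omega\cap K)=0$ and $i_K(F,\Omega^1\cap K)=1$ — is proved by the identical additivity computation, which then returns index $-1\neq 0$ on the difference set and again produces a fixed point there.
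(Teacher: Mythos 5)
The paper does not actually prove Theorem \ref{index}: it is quoted as a known consequence of classical fixed point index theory, with the proof deferred to \cite{Amann-rev, guolak}. Your argument is precisely the standard proof that those references contain, and it is correct: the homotopy $H(t,x)=tF(x)$ together with hypothesis (1) (a fixed point with $t\in(0,1]$ gives $Fx=\mu x$, $\mu=1/t\ge 1$; for $t=0$ it gives $x=0\notin\partial(\Omega\cap K)$ since $0\in\Omega\cap K$) yields $i_K(F,\Omega\cap K)=1$ by normalization; the translation homotopy $F+\lambda h$, $\lambda\in[0,\Lambda]$, with $\Lambda\|h\|>\sup\{\|x\|+\|F(x)\|:x\in\overline{\Omega^1\cap K}\}$ (finite by boundedness of $\Omega$ and compactness of $F$, and $\|h\|>0$ since $h\neq 0$) yields $i_K(F,\Omega^1\cap K)=0$; additivity over the disjoint relatively open sets $\Omega^1\cap K$ and $(\Omega\cap K)\setminus\overline{\Omega^1\cap K}$ plus the solution property gives the localized fixed point, and the swapped-index case is the same computation with $0-1=-1\neq 0$. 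The one genuine subtlety is the one you flagged: hypothesis (2) as stated only covers $\lambda>0$, so it does not by itself exclude a fixed point of $F$ on $\partial(\Omega^1\cap K)$, without which $i_K(F,\Omega^1\cap K)$ is undefined; reading this exclusion as a standing assumption (or, equivalently, noting that in that degenerate case $F$ trivially has a fixed point, albeit not in the stated localization) is exactly how the classical sources handle it, so your treatment is appropriate. In short, your proposal correctly reconstructs the omitted classical proof rather than diverging from anything in the paper.
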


\section{Existence and non-existence results}
We provide now the existence and non-existence results for the system 
\begin{equation}\label{PDE}
\begin{cases}\
&-\Delta u =f_1(|x|,u,v,|\nabla u|,|\nabla v|)\text{ in } \Omega, \\
&-\Delta v=f_2(|x|,u,v,|\nabla u|,|\nabla v|)\text{ in } \Omega,\\
&u=0 \text{ on }\partial \Omega,\\
&v=0 \text{ on }|x|=R_0 \text{ and }\displaystyle\frac{\partial
v}{\partial r}=0 \text{ on }|x|=R_1,
\end{cases}
\end{equation}
where $\Omega=\{ x\in\mathbb{R}^n : R_0<|x|<R_1\}$ is an annulus,
$0<R_0<R_1<+\infty$ and  for $i=1,2$, $f_i:[R_0,R_1]\times[0,+\infty)^4 \to [0,+\infty)$ is a continuous function.

We define the following
sets:
\begin{align*}
\Omega^{\rho_1,\rho_2}&=[ R_0,R_1]\times \left [0,
\rho_1\right]\times\left [0, \rho_2\right]\times
\left[0, +\infty\right)\times\left [ 0,
+\infty\right),\\
A_1^{s_1,s_2}&=[
\min\{r(a_1),r(b_1)\},\max\{r(a_1),r(b_1)\}]\times\left[s_1,\frac{s_1}{c_1}\right]\times\left[0,\frac{s_2}{c_2}\right]\times\left[0, +\infty\right)\times\left [ 0,
+\infty\right),\\
A_2^{s_1,s_2}&=[\min\{r(a_2),r(b_2)\},\max\{r(a_2),r(b_2)\}]\times\left[0,\frac{s_2}{c_2}\right]\times\left[s_2,\frac{s_2}{c_2}\right]\times\left[0, +\infty\right)\times\left [ 0,
+\infty\right),\\
\end{align*}

\begin{thm}\label{ellyptic}
 Suppose that there exist
$\rho_1, \rho_2, s_1,s_2\in (0,+\infty )$, with $\rho _{i}<c_i\,s
_{i}\,,\,\,i=1,2$, such that the following conditions hold
\begin{equation}\label{pde2}
\sup_{\Omega^{\rho_1,\rho_2}}
f_i(r,w_1,w_2,z_1,z_2)<\frac{m_i}{\displaystyle\sup_{t \in
[0,1]}p(t)}\,\rho_i
\end{equation}
and
\begin{equation}\label{pde3}
\inf_{A_i^{s_1,s_2}}
f_i(r,w_1,w_2,z_1,z_2)>\frac{M_i}{\displaystyle\inf_{t
\in [a_i,b_i]}p(t)}\,s_i.
\end{equation}
Then the system (\ref{PDE}) has at least one positive radial
solution. 
\end{thm}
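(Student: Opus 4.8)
The plan is to realize the solution as a nonzero fixed point of the operator $T$ from \eqref{operT} inside the cone $K$; since by definition a fixed point of $T$ in $K$ with positive norm \emph{is} a positive radial solution of \eqref{PDE}, it is enough to produce one. I would apply Theorem~\ref{index} with the ``inner'' set $\Omega^{1}\cap K=K_{\rho_{1},\rho_{2}}$, on which the index will be shown to equal $1$, and the ``outer'' set $\Omega\cap K=V_{s_{1},s_{2}}$, on which it will be shown to equal $0$, so that a fixed point exists in $V_{s_{1},s_{2}}\setminus\overline{K_{\rho_{1},\rho_{2}}}$ by the second alternative of that theorem. For this choice to be admissible one needs $\overline{K_{\rho_{1},\rho_{2}}}\subset V_{s_{1},s_{2}}$: by $(P_{1})$, $K_{\rho_{1},\rho_{2}}\subset K_{\rho_{1}/c_{1},\rho_{2}/c_{2}}$, and the hypothesis $\rho_{i}<c_{i}s_{i}$ gives $\rho_{i}/c_{i}<s_{i}$, so the closure of $K_{\rho_{1},\rho_{2}}$ lies inside $K_{s_{1},s_{2}}\subset V_{s_{1},s_{2}}$, again by $(P_{1})$; also $0\in V_{s_{1},s_{2}}$ and $V_{s_{1},s_{2}}$ is bounded, so $\overline{V_{s_{1},s_{2}}}\ne K$.

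To get $i_{K}(T,K_{\rho_{1},\rho_{2}})=1$ I would verify hypothesis (1) of Theorem~\ref{index}. If it fails there are $(u,v)\in\partial K_{\rho_{1},\rho_{2}}$ and $\mu\ge 1$ with $T_{i}(u,v)=\mu w_{i}$, $i=1,2$, where $(w_{1},w_{2})=(u,v)$. On $\partial K_{\rho_{1},\rho_{2}}$ one has $\|u\|\le\rho_{1}$, $\|v\|\le\rho_{2}$, hence $0\le u(s)\le\rho_{1}$ and $0\le v(s)\le\rho_{2}$ for all $s$, and by $(P_{2})$ there is an index with $\|w_{i}\|_{\infty}=\rho_{i}$. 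Since the two gradient slots of $\Omega^{\rho_{1},\rho_{2}}$ are all of $[0,+\infty)$, the point $\bigl(r(s),u(s),v(s),|u'(s)|/|r'(s)|,|v'(s)|/|r'(s)|\bigr)$ lies in $\Omega^{\rho_{1},\rho_{2}}$ for every $s$, so from the definition of $g_{i}$ and \eqref{pde2},
\[
g_{i}(s,u(s),v(s),|u'(s)|,|v'(s)|)\le\Bigl(\sup_{t\in[0,1]}p(t)\Bigr)\,\sup_{\Omega^{\rho_{1},\rho_{2}}}f_{i}<m_{i}\,\rho_{i},\qquad i=1,2.
\]
Integrating against $k_{i}(t,\cdot)$ and recalling $m_{i}^{-1}=\sup_{t\in[0,1]}\int_{0}^{1}k_{i}(t,s)\,ds$ gives $\|T_{i}(u,v)\|_{\infty}<\rho_{i}$; then \eqref{u'} and \eqref{v'} promote this to $\|T_{i}(u,v)\|<\rho_{i}$ for $i=1,2$. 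Picking the index $i$ supplied by $(P_{2})$, for which $\|w_{i}\|=\|w_{i}\|_{\infty}=\rho_{i}$ since $w_{i}\in K_{i}$, we obtain $\|T_{i}(u,v)\|=\mu\|w_{i}\|\ge\rho_{i}$ --- a contradiction.

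To get $i_{K}(T,V_{s_{1},s_{2}})=0$ I would verify hypothesis (2) with $h=(1,1)$, which lies in $K\setminus\{0\}$ because the constant function $1$ belongs to each $K_{i}$. If it fails there are $(u,v)\in\partial V_{s_{1},s_{2}}$ and $\lambda>0$ with $u=T_{1}(u,v)+\lambda$ and $v=T_{2}(u,v)+\lambda$. By $(P_{3})$ there is an index $i$ with $\min_{[a_{i},b_{i}]}w_{i}=s_{i}$ and $s_{i}\le w_{i}(t)\le s_{i}/c_{i}$ on $[a_{i},b_{i}]$; take $i=2$ (the case $i=1$ is symmetric, with the roles of $u,v$ and of $A_{1},A_{2}$ interchanged). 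Since $(u,v)\in\partial V_{s_{1},s_{2}}$ also forces $\min_{[a_{1},b_{1}]}u\le s_{1}$, the Harnack inequality defining $K_{1}$ gives $\|u\|_{\infty}\le c_{1}^{-1}\min_{[a_{1},b_{1}]}u\le s_{1}/c_{1}$. Hence, for $s\in[a_{2},b_{2}]$, the point $\bigl(r(s),u(s),v(s),|u'(s)|/|r'(s)|,|v'(s)|/|r'(s)|\bigr)$ lies in $A_{2}^{s_{1},s_{2}}$ (here $r$ is monotone, so $r([a_{2},b_{2}])$ is the stated interval), and \eqref{pde3} yields, for such $s$,
\[
g_{2}(s,u(s),v(s),|u'(s)|,|v'(s)|)\ge\Bigl(\inf_{t\in[a_{2},b_{2}]}p(t)\Bigr)\,\inf_{A_{2}^{s_{1},s_{2}}}f_{2}>M_{2}\,s_{2}.
\]
Integrating $T_{2}(u,v)(t)\ge\int_{a_{2}}^{b_{2}}k_{2}(t,s)\,g_{2}(s,\dots)\,ds$ and recalling $M_{2}^{-1}=\inf_{t\in[a_{2},b_{2}]}\int_{a_{2}}^{b_{2}}k_{2}(t,s)\,ds$ gives $T_{2}(u,v)(t)>s_{2}$ for every $t\in[a_{2},b_{2}]$; then $v(t)=T_{2}(u,v)(t)+\lambda>s_{2}$ there, contradicting $\min_{[a_{2},b_{2}]}v=s_{2}$.

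With both indices computed, the second alternative of Theorem~\ref{index} (with $\Omega\cap K=V_{s_{1},s_{2}}$, $\Omega^{1}\cap K=K_{\rho_{1},\rho_{2}}$) gives a fixed point $(u,v)\in V_{s_{1},s_{2}}\setminus\overline{K_{\rho_{1},\rho_{2}}}$ of $T$; lying outside $\overline{K_{\rho_{1},\rho_{2}}}$ it satisfies $\|u\|\ge\rho_{1}$ or $\|v\|\ge\rho_{2}$, so $\|(u,v)\|>0$ and $(u,v)$ is a positive radial solution of \eqref{PDE}. I expect the only genuinely delicate point to be the bookkeeping in the second index computation: on $\partial V_{s_{1},s_{2}}$ one must identify correctly which component is pinched from below --- giving the two-sided bound $s_{i}\le w_{i}\le s_{i}/c_{i}$ on $[a_{i},b_{i}]$ --- and which is controlled only from above through the cone's Harnack inequality $\|w_{j}\|_{\infty}\le s_{j}/c_{j}$, so that the argument tuple really lands in $A_{i}^{s_{1},s_{2}}$, while keeping the constants $m_{i}$, $M_{i}$, $\sup_{[0,1]}p$, $\inf_{[a_{i},b_{i}]}p$ aligned with the passage between $f_{i}$ and $g_{i}$. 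The dependence of $f_{i}$ on the gradients causes no trouble: those variables are absorbed by taking suprema and infima over all of $[0,+\infty)^{2}$ in $\Omega^{\rho_{1},\rho_{2}}$ and $A_{i}^{s_{1},s_{2}}$, and the derivative part of the $C^{1}_{\omega_{i}}$-norm is neutralized by \eqref{u'} and \eqref{v'}.
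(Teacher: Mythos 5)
Your proposal is correct and follows essentially the same route as the paper's own proof: index $1$ on $K_{\rho_1,\rho_2}$ via condition (1) of Theorem~\ref{index} and \eqref{pde2}, index $0$ on $V_{s_1,s_2}$ via condition (2) with $h=(1,1)$ and \eqref{pde3}, then a fixed point of $T$ in $V_{s_1,s_2}\setminus\overline{K}_{\rho_1,\rho_2}$. If anything, you are slightly more explicit than the paper on the nesting $\overline{K}_{\rho_1,\rho_2}\subset V_{s_1,s_2}$ and on bounding the non-pinched component by $s_j/c_j$ so the argument tuple lands in $A_i^{s_1,s_2}$ (your bound $u\le s_1/c_1$ matches the set $\tilde{A}_2^{s_1,s_2}$ used in the paper's proof, which is the intended reading of $A_2^{s_1,s_2}$).
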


\begin{proof}
We note that the choice of the numbers $\rho_i$ and $s_i$
assures the compatibility of conditions \eqref{pde2} and \eqref{pde3}.  Moreover, when $f_i$ acts on $\Omega^{\rho_1,\rho_2}$ and $A_i^{s_1,s_2}$,
$g_i$ acts respectively on the subsets  $\tilde{\Omega}^{\rho_1,\rho_2}$ and $\tilde{A}_i^{s_1,s_2}$  defined by
\begin{align*}
\tilde{\Omega}^{\rho_1,\rho_2}&=[ 0,1]\times \left [0,
\rho_1\right]\times\left [0, \rho_2\right]\times \left[0, +\infty\right)\times\left [ 0,
+\infty\right),\\
\tilde{A}_1^{s_1,s_2}&=[
a_1,b_1]\times\left[s_1,\frac{s_1}{c_1}\right]\times\left[0,\frac{s_2}{c_2}\right]\times\left[0, +\infty\right)\times\left [ 0,
+\infty\right),\\
\tilde{A}_2^{s_1,s_2}&=[a_2,b_2]\times\left[0,\frac{s_1}{c_1}\right]\times\left[s_2,\frac{s_2}{c_2}\right]\times\left[0, +\infty\right)\times\left [ 0,
+\infty\right).
\end{align*}
Firstly we claim that $\lambda (u,v)\neq T(u,v)$ for every $(u,v)\in
\partial K_{\rho_1,\rho_2}$ and for every $\lambda \geq 1$,
which implies that the index of $T$ is 1 on $K_{\rho_1,\rho_2}$.

Assume this is not true. Then there exist $\lambda \geq 1$ and $(u,v)\in
\partial K_{\rho_1,\rho_2}$ such that $\lambda (u,v)=T(u,v)$.

Suppose that $\|u\|=|| u || _{\infty}=\rho_1$ holds. Then\begin{align*}
&\lambda u(t)= \int_{0}^{1} k_1(t,s)
g_1(s,u(s),v(s),|u'(s)|, |v'(s)|)ds\\
&=\int_{0}^{1} k_1(t,s)
p(s)f_1\left(r(s),u(s),v(s),\left|\frac{u'(s)}{r'(s)}\right|, \left|\frac{v'(s)}{r'(s)}\right|\right)ds\\
& < m_1 \rho_1\,\int_{0}^{1} \frac{p(s)}{\displaystyle\sup_{t \in[0,1]}p(t)} k_1(t,s)ds\leq m_1\rho_1\,\int_{0}^{1} k_1(t,s)ds.
\end{align*}
Taking the supremum in $[0,1]$, we have
\begin{equation*}
\lambda \rho_1< m_1 \rho_1\sup_{t
\in [0,1]}\int_0^1k_1(t,s)ds= \dfrac{m_1}{m_1} \rho_1=\rho_1.
\end{equation*}
Therefore we obtain $\lambda \rho_1<\rho_1,$ which
contradicts the fact that $\lambda \geq 1$.\\
The case $\|v\|=\|v\|_{\infty}=\rho_2$ analogously follows.\\

Consider $h(t)=1$ for $t\in [ 0,1],$ and note that $(h,h)\in
K$.
Now we claim that
\begin{equation*}
(u,v)\neq T(u,v)+\lambda (h,h)\quad \text{for }(u,v)\in \partial V_{s_1,s_2}\quad \text{and }\lambda \geq 0,
\end{equation*}%
that assures that the index of $T$ is $0$ on $V_{s_1,s_2}$.
Assume, by contradiction, that there exist $(u,v)\in \partial V_{s_1,s_2}$ and $\lambda \geq 0$ such that $(u,v)=T(u,v)+\lambda (h,h)$.
Without loss of generality, we can assume that, for $t\in [a_1,b_1]$,
$$\min_{t\in[a_1,b_1]} u(t)=
s_1,\,s_1\leq u(t)\leq {s_1/c_1}.$$
Then, for $t\in [ a_{1},b_{1}]$, we obtain
\begin{align*}
u(t) &=\int_{0}^{1}k_1(t,s)g_1(s,u(s),v(s),|u'(s)|,|v'(s)|)ds+\lambda h(t) \\
&=\int_{0}^{1}k_1(t,s)p(s)f_1\left(r(s),u(s),v(s),\left|\frac{u'(s)}{r'(s)}\right|, \left|\frac{v'(s)}{r'(s)}\right|\right))ds+\lambda h(t) \\
&\geq \int_{a_1}^{b_1}k_1(t,s)p(s)f_1\left(r(s),u(s),v(s),\left|\frac{u'(s)}{r'(s)}\right|, \left|\frac{v'(s)}{r'(s)}\right|\right))ds \\
&>M_1s_1\int_{a_1}^{b_1}\frac{p(s)}{\displaystyle\inf_{t \in [a_1,b_1]}p(t)}k_{1}(t,s)ds \geq M_1s_1 \int_{a_1}^{b_1}k_{1}(t,s)ds.
\end{align*}
Taking the minimum over $[a_1,b_1]$ gives
\begin{equation*}
s_1=\min_{t\in [ a_1,b_1]}u(t)> M_1s_1\frac{1}{M_1}=s_1,
\end{equation*}
i.e. a contradiction.

The case of $\displaystyle\min_{t\in[a_2,b_2]} v(t)=s_2$ follows in a similar manner.\\
Therefore we have $i_K(T, K_{\rho_1,\rho_2})=1$ and $i_K(T,
V_{s_1,s_2})=0$. From Theorem \ref{index} it follows that the compact operator $T$ has a
fixed point in $V_{s_1,s_2}\setminus
\overline{K}_{\rho_1,\rho_2}$. Then the system \eqref{PDE} admits a positive radial solution. 
\end{proof}
\begin{ex} We note that Theorem ~\ref{ellyptic} can be apply when the nonlinearities $f_i$ are of the type
\begin{equation*}
f_i(|x|,u,v,|\nabla u|, |\nabla v|)=(\delta_i u^{\alpha_i}+\gamma_i v^{\beta_i})q_i(|x|,u,v,|\nabla u|, |\nabla v|)
\end{equation*}
with $q_i$ continuous functions bounded  by a strictly positive constant, $\alpha_i,\beta_i>1$ and suitable $\delta_i,\gamma_i \geq 0$.\\
For example we can consider  in $\mathbb{R}^3$  the system of BVPs
\begin{gather}\label{ellbvpex}
\begin{cases}
&-\Delta u =\frac{e^{-|x|^2}}{6}\,(2-\sin(|\nabla u|^2+|\nabla v|^2)\,u^5 \text{ in } \Omega, \\
&-\Delta v=\frac{1}{\pi}\,e^{-|x|^2}\,\arctan\left(1+|\nabla u|^2+|\nabla v|^2\right)\,v^5\text{ in } \Omega,\\
&u=0 \text{ on }\partial \Omega,\\
&v=0 \text{ on }|x|=1 \text{ and }\displaystyle\frac{\partial
v}{\partial r}=0 \text{ on }|x|=e,
\end{cases}
\end{gather}
where $\Omega=\{ x\in\mathbb{R}^3 : 1<|x|<e\}$.\\
By direct computation, we obtain $\displaystyle\sup_{t \in [0,1]}p(t)=e^2(e-1)^2$ and, fixed $[a_1,b_1]=\left[\frac{1}{4},\frac{3}{4}\right],\,[a_2,b_2]=\left[\frac{1}{2},1\right]$, we have $c_1=\frac{1}{4},\,c_2=\frac{1}{2}$,
\begin{align*}
&\inf_{t \in \left[\frac{1}{4},\frac{3}{4}\right]}p(t)=\frac{e^2(e-1)^2}{\left(e-\frac{e-1}{4}\right)^4},\,\,\,\,\,\,\inf_{t \in \left[\frac{1}{2},1\right]}p(t)=\frac{e^2(e-1)^2}{\left(e-\frac{e-1}{2}\right)^4},\\
&M_1:=\left(\inf_{t\in[a_1,b_1]}\int_{a_1}^{b_1}k_1(t,s)ds\right)^{-1}=16,\,\,\,\,\,\,
M_2:=\left(\inf_{t\in[a_2,b_2]}\int_{a_2}^{b_2}k_2(t,s)ds\right)^{-1}=4.
\end{align*}
With the choice of $\rho_1=\rho_2=1/10,\,\, s_1=s_2=10$,
we obtain
\begin{align*}
&\sup_{\Omega^{\rho_1,\rho_2}}\, f_1\,\leq \frac{\rho_1^5}{2}=  5\times10^{-6}<0.0366701=\frac{m_1}{\displaystyle\sup_{t \in
[0,1]}p(t)}\,\rho_1,\\
&\sup _{\Omega^{\rho_1,\rho_2}} f_2\; \leq \frac{\rho_2^5}{2}=5\times10^{-6}<0.00916753=\frac{m_2}{\displaystyle\sup_{t \in
[0,1]}p(t)}\,\rho_2,\\
&\,\,\,\,\,\,\,\,\,\,\,\,\,\,\,\,\,\,\,\,\,\,\,\,\,\,\,\,\,\,\,\,\,\,\,\,\,\,\,\,\,\,\,\,\,\,\,\,\,\,\,\,\,\,\,\,\,\,\,\,\,\,\,\,\,\,\,\,\,\,\,\,\,\,\,\,\,\,\,\,\,\Omega^{\rho_1,\rho_2}= [1,e]\times\left [0,\frac{1}{10}\right]t\times\left[0,\frac{1}{10}\right]\times [0,+\infty)\times[0,+\infty);\\
&\inf_{A_1^{s_1,s_2}}  f_1\,\geq  \frac{e^{-\left(\frac{4e}{e+3}\right)^2}}{6}s_1^5=448.356>201.236=\frac{M_1}{\displaystyle\inf_{t \in
\left[\frac{1}{4},\frac{3}{4}\right]}p(t)}\,s_1,\\
&\,\,\,\,\,\,\,\,\,\,\,\,\,\,\,\,\,\,\,\,\,\,\,\,\,\,\,\,\,\,\,\,\,\,\,\,\,\,\,\,\,\,\,\,\,\,\,\,\,\,\,\,\,\,\,\,\,\,\,\,\,\,\,\,\,\,\,\,\,\,\,\,\,\,\,\,\,\,\,\,\,A_1^{s_1,s_2}= \left[\frac{4e}{3e+1},\frac{4e}{e+3}\right]\times [10, 40]\times[0,40]\times[0,+\infty)\times[0,+\infty);\\
&\inf_{A_2^{s_1,s_2}}\, f_2\; \geq \frac{2e^{-e^2}\arctan1}{\pi}\,s_2^5=30.8989>21.9044=\frac{M_2}{\displaystyle\inf_{t \in
\left[\frac{1}{2},1\right]}p(t)}\,s_2,\\
&\,\,\,\,\,\,\,\,\,\,\,\,\,\,\,\,\,\,\,\,\,\,\,\,\,\,\,\,\,\,\,\,\,\,\,\,\,\,\,\,\,\,\,\,\,\,\,\,\,\,\,\,\,\,\,\,\,\,\,\,\,\,\,\,\,\,\,\,\,\,\,\,\,\,\,\,\,\,\,\,\,A_2^{s_1,s_2}= \left[\frac{2e}{e+1},e\right]\times [0,20]\times[10,20]\times[0,+\infty)\times[0,+\infty).\\
\end{align*}
Then the hypotheses of Theorem~\ref{ellyptic} are
satisfied and hence the system~\eqref{ellbvpex} has at least one
a positive solution.
\end{ex}
By means of Theorem~\ref{index} and the results contained in Theorem~\ref{ellyptic}, it is possible to obtain results about the existence of  \emph{multiple} positive solutions
of the system~\eqref{PDE}. For brevity, here we state a result on the existence of two positive solutions and refer to~\cite{lan-lin-na,  lanwebb} for the other kind of results that can be stated.
\begin{thm}
Suppose that  there exist $\rho _{i},s _{i},\theta_i\in (0,\infty )$ with $\rho _{i}/c_i<s_i <\theta_{i}$  such that
\begin{align*}
&\inf_{A_i^{\rho_1,\rho_2}}
f_i(r,w_1,w_2,z_1,z_2)>\frac{M_i}{\displaystyle\inf_{t
\in [a_i,b_i]}p(t)}\,\rho_i,\\
&\sup_{\Omega^{s_1,s_2}}
f_i(r,w_1,w_2,z_1,z_2)<\frac{m_i}{\displaystyle\sup_{t \in
[0,1]}p(t)}\,s_i\\
&\inf_{A_i^{\theta_1,\theta_2}}
f_i(r,w_1,w_2,z_1,z_2)>\frac{M_i}{\displaystyle\inf_{t
\in [a_i,b_i]}p(t)}\,\theta_i.
\end{align*}
Then the system~\eqref{PDE} has at least two positive radial
solutions.
\end{thm}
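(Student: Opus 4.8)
The plan is to repeat, on \emph{three} nested regions rather than two, the two index computations carried out in the proof of Theorem~\ref{ellyptic}, and then to invoke Theorem~\ref{index} twice. As a first step one translates the three displayed hypotheses on the $f_i$ into the corresponding inequalities for the $g_i$ on the sets $\tilde{A}_i^{\rho_1,\rho_2}$, $\tilde{\Omega}^{s_1,s_2}$ and $\tilde{A}_i^{\theta_1,\theta_2}$, exactly as \eqref{pde2}--\eqref{pde3} were recast as conditions on $g_i$ in Theorem~\ref{ellyptic}; here $\tilde{\Omega}^{s_1,s_2}$ and $\tilde{A}_i^{\theta_1,\theta_2}$ are the evident analogues of $\tilde{\Omega}^{\rho_1,\rho_2}$ and $\tilde{A}_i^{\rho_1,\rho_2}$ with $s$'s, resp.\ $\theta$'s, in place of the $\rho$'s.

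Next I would compute three fixed point indices. The hypothesis on $A_i^{\rho_1,\rho_2}$ is of the type \eqref{pde3}, so the argument in the second half of the proof of Theorem~\ref{ellyptic}---with $h\equiv 1$, with $\rho_i$ in the role of $s_i$, ending in the contradiction $\rho_1=\min_{[a_1,b_1]}u>\rho_1$---gives $(u,v)\ne T(u,v)+\lambda(h,h)$ on $\partial V_{\rho_1,\rho_2}$ for all $\lambda\ge 0$, hence $i_K(T,V_{\rho_1,\rho_2})=0$. The hypothesis on $\Omega^{s_1,s_2}$ is of the type \eqref{pde2}, so the first half of that proof---with $s_i$ in place of $\rho_i$---gives $\lambda(u,v)\ne T(u,v)$ on $\partial K_{s_1,s_2}$ for all $\lambda\ge 1$, hence $i_K(T,K_{s_1,s_2})=1$. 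Finally, the hypothesis on $A_i^{\theta_1,\theta_2}$ gives, by the same reasoning used for $\rho$, $i_K(T,V_{\theta_1,\theta_2})=0$. In each of these arguments, as in Theorem~\ref{ellyptic}, the point $(r(s),u(s),v(s),|u'(s)/r'(s)|,|v'(s)/r'(s)|)$ does lie in the set on which the relevant $f_i$ is controlled, because membership of $(u,v)$ in the cone $K$ turns the information on $\partial V$ or $\partial K$ into the pointwise bounds on $u,v$ that are needed.

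Then I would verify the chain of inclusions. By property $(P_1)$, $V_{\rho_1,\rho_2}\subset K_{\rho_1/c_1,\rho_2/c_2}$, hence $\overline{V_{\rho_1,\rho_2}}\subset\{(u,v)\in K:\|u\|\le\rho_1/c_1,\ \|v\|\le\rho_2/c_2\}\subset K_{s_1,s_2}$, the last step using $\rho_i/c_i<s_i$; and every $(u,v)\in\overline{K_{s_1,s_2}}$ satisfies $\min_{[a_i,b_i]}w_i\le\|w_i\|_\infty\le s_i<\theta_i$, so $\overline{K_{s_1,s_2}}\subset V_{\theta_1,\theta_2}$. Applying Theorem~\ref{index} first to the pair $\bigl(K_{s_1,s_2},\,V_{\rho_1,\rho_2}\bigr)$, on which the indices of $T$ are $1$ and $0$, and then to the pair $\bigl(V_{\theta_1,\theta_2},\,K_{s_1,s_2}\bigr)$, on which they are $0$ and $1$, produces a fixed point of $T$ in $K_{s_1,s_2}\setminus\overline{V_{\rho_1,\rho_2}}$ and a second one in $V_{\theta_1,\theta_2}\setminus\overline{K_{s_1,s_2}}$. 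These two sets are disjoint and neither contains $0$, so the two fixed points are distinct and have positive norm; as in Theorem~\ref{ellyptic}, they are positive radial solutions of~\eqref{PDE}.

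No step here is genuinely difficult: the whole argument is a bookkeeping exercise built on Theorem~\ref{ellyptic}. The only point requiring attention is the ordering $\rho_i/c_i<s_i<\theta_i$, which is precisely what makes the three regions nest in the way Theorem~\ref{index} requires---together with the routine and (as already in Theorem~\ref{ellyptic}) tacit step of realising the relatively open sets $V_{\rho_1,\rho_2}$, $K_{s_1,s_2}$, $V_{\theta_1,\theta_2}$ as traces on $K$ of sets open in $X$.
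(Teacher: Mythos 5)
Your proof is correct and is exactly the argument the paper intends: the paper states this multiplicity theorem without proof, remarking only that it follows from Theorem~\ref{index} and the computations in Theorem~\ref{ellyptic}. Your three-level index computation ($i_K(T,V_{\rho_1,\rho_2})=0$, $i_K(T,K_{s_1,s_2})=1$, $i_K(T,V_{\theta_1,\theta_2})=0$), with the nesting $\overline{V_{\rho_1,\rho_2}}\subset K_{s_1,s_2}$ and $\overline{K_{s_1,s_2}}\subset V_{\theta_1,\theta_2}$ guaranteed by $\rho_i/c_i<s_i<\theta_i$ and property $(P_1)$, is precisely the standard way to carry this out.
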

We now show some non-existence results for the system of ellyptic
equations \eqref{PDE} when the nonlinearities $f_i$ are enough "small" or "large".

\begin{thm}
Assume that one of following conditions holds:
\begin{equation}\label{cond1}
f_i(r,w_1,w_2,z_1,z_2)<\frac{m_i}{\displaystyle\sup_{t\in [0,1]}p(t)} w_i\,,\,\, r\in [R_0,R_1],\,
w_i>0, \text{ for }i=1,2,
\end{equation}
\begin{equation}\label{cond2}
f_i(r,w_1,w_2, z_1,z_2)>\frac{M_i}{\displaystyle\inf_{t\in [a_i,b_i]}p(t)} w_i\,\,,\,\, r\in [R_0,R_1] ,\,
w_i>0, \text{ for }i=1,2.
\end{equation}
Then the only possible positive solution of the system \eqref{PDE} is the zero one.
\end{thm}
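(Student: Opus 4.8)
The plan is to argue by contradiction. Suppose $(u,v)\in K$ is a radial solution of \eqref{PDE} with $\|(u,v)\|>0$; by definition this means $(u,v)$ solves the ODE system \eqref{1syst}, equivalently $(u,v)=T(u,v)$ for the operator $T$ of \eqref{operT}. Since $\|(u,v)\|>0$, at least one component — say $u$, the argument for $v$ being analogous with $k_1,\omega_1,m_1,M_1,[a_1,b_1],c_1$ replaced by their index-$2$ counterparts — is not identically zero. From $u=T_1(u,v)=\int_0^1 k_1(\cdot,s)\,g_1(s,u(s),v(s),|u'(s)|,|v'(s)|)\,ds$ with $g_1\ge 0$ continuous, $u\not\equiv 0$ forces the integrand to be strictly positive on some open subinterval, so the positivity of $k_1$ on $(0,1)\times(0,1)$ gives $u(t)>0$ for all $t\in(0,1)$; in particular $u(s)>0$ on $[a_1,b_1]$.

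First, under \eqref{cond1}: recalling $g_i(s,w_1,w_2,z_1,z_2)=p(s)f_i(r(s),w_1,w_2,|z_1/r'(s)|,|z_2/r'(s)|)$ and $p(s)\le\sup_{[0,1]}p$, for $s\in(0,1)$ one has $g_1(s,u(s),v(s),|u'(s)|,|v'(s)|)<m_1\,u(s)$. Feeding this into $u=T_1(u,v)$, using $u(s)\le\|u\|_\infty$ and the defining identity $m_1=\big(\sup_{t}\int_0^1 k_1(t,s)\,ds\big)^{-1}$, yields
\[
u(t)< m_1\int_0^1 k_1(t,s)\,u(s)\,ds\le \|u\|_\infty\qquad\text{for every }t\in[0,1],
\]
the inequality remaining \emph{strict} because on $(0,1)$ the left integrand is pointwise strictly below $m_1 k_1(t,s)u(s)$ wherever $k_1(t,\cdot)$ does not vanish. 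Since $u$ attains its maximum on the compact interval $[0,1]$, this gives $\|u\|_\infty<\|u\|_\infty$, a contradiction.

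Under \eqref{cond2}: on $[a_1,b_1]$ one has $u(s)>0$ and, using $p(s)\ge\inf_{[a_1,b_1]}p$ there, $g_1(s,u(s),v(s),|u'(s)|,|v'(s)|)>M_1\,u(s)$. Restricting the integral defining $u$ to $[a_1,b_1]$, bounding $u(s)\ge\min_{[a_1,b_1]}u$, and using $M_1=\big(\inf_{t\in[a_1,b_1]}\int_{a_1}^{b_1}k_1(t,s)\,ds\big)^{-1}$, for $t\in[a_1,b_1]$ I obtain
\[
u(t)\ge\int_{a_1}^{b_1}k_1(t,s)\,g_1(s,u(s),v(s),|u'(s)|,|v'(s)|)\,ds> M_1\Big(\min_{[a_1,b_1]}u\Big)\int_{a_1}^{b_1}k_1(t,s)\,ds\ge \min_{[a_1,b_1]}u,
\]
again strict since $k_1(t,s)>0$ on $[a_1,b_1]$; taking the minimum over $t\in[a_1,b_1]$ gives $\min_{[a_1,b_1]}u>\min_{[a_1,b_1]}u$, a contradiction. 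In either case a solution with $\|(u,v)\|>0$ cannot exist, so $(0,0)$ is the only solution of \eqref{PDE} in $K$.

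The step requiring the most care will be preserving the \emph{strict} inequalities after integration: this is exactly where one uses that the component under scrutiny is strictly positive on the interior of $[0,1]$ (resp.\ on $[a_i,b_i]$), which in turn follows from the positivity of the kernels $k_i$ and from $g_i\not\equiv 0$. A minor technical point is the Robin component $v$, for which $v(1)$ need not vanish; but since $k_2(1,s)=s$ and $\int_0^1 s\,ds=1/2=1/m_2$, the estimate under \eqref{cond1} still closes, and $v>0$ on $(0,1]$ supplies the required strictness.
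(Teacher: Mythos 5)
Your proof is correct and follows essentially the same route as the paper's: assume a nontrivial fixed point $(u,v)\in K$ of $T$, insert \eqref{cond1} or \eqref{cond2} into the integral representation, and use the defining constants $m_i$, $M_i$ to reach the contradictions $\|u\|_{\infty}<\|u\|_{\infty}$, respectively $\min_{t\in[a_i,b_i]}u(t)<\min_{t\in[a_i,b_i]}u(t)$. The only (minor) difference is that you secure the pointwise strictness by proving $u>0$ on $(0,1)$ from the positivity of the kernel, whereas the paper invokes the cone inequality $\min_{t\in[a_1,b_1]}u(t)\geq c_1\|u\|_{\infty}$ for \eqref{cond2} and leaves this point implicit for \eqref{cond1}.
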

\begin{proof}
Suppose that (\ref{cond1}) holds and assume that there exists a
solution $(\bar{u},\bar{v})$ of \eqref{PDE}, $(\bar{u},\bar{v})\neq
(0,0)$; then $(u,v):=(\bar{u}\circ r,\bar{v}\circ r)$ is a fixed
point of $T$. Let, for example, be $\|(u,v)\|=\|u\| =\|u\|_{\infty}\neq 0$.\\
 Then, for $t\in [0,1]$, we have
\begin{align*}
&u(t)  =\int_0^1k_1(t,s)g_1(s,u(s),v(s),|u'(s)|,|v'(s)|)ds=\\
&\int_0^1k_1(t,s)p(s)f_1\left(r(s),u(s),v(s),\left|\frac{u'(s)}{r'(s)}\right|,\left|\frac{v'(s)}{r'(s)}\right|\right)ds\\
&<\frac{m_1}{\displaystyle\sup_{t\in [0,1]}p(t)}\int_0^1p(s)k_1(t,s)u(s)ds \\
&\le m_1\int_0^1k_1(t,s)u(s)ds \le m_1\|u\|_{\infty}\int_0^1k_1(t,s)ds.
\end{align*}
 Taking the supremum for $t\in [0,1]$, we have
$$
\|u\|_\infty< m_1\|u\|_\infty\sup_{t\in[0,1]}\int_0^1k_1(t,s)\,ds
=\|u\|_\infty,
$$
a contradiction.\\
Suppose that (\ref{cond2}) holds and assume that there exists
$(u,v)\in K$ such that $(u,v)=T(u,v)$ and $(u,v)\neq (0,0)$. Let,
for example, $\|u\|_{\infty}\neq 0$; then
$\sigma:=\displaystyle\min_{t\in[a_1,b_1]}u(t)>0$ since $u \in K_1$.  We
have, for $t\in [0,1]$,
\begin{align*}&u(t)  =\int_0^1k_1(t,s)g_1(s,u(s),v(s),|u'(s)|,|v'(s)|)ds\\
&\geq\int_{a_1}^{b_1}k_1(t,s)p(s)f_1\left(r(s),u(s),v(s),\left|\frac{u'(s)}{r'(s)}\right|,\left|\frac{v'(s)}{r'(s)}\right|\right)ds\\
&>\frac{M_1}{\displaystyle\inf_{t\in [a_1,b_1]}p(t)}\int_{a_1}^{b_1}p(s)k_1(t,s)u(s)ds 
\geq M_1\int_{a_1}^{b_1}k_1(t,s)u(s)ds.
\end{align*}
Taking the minimum  for $t\in [a_1,b_1]$, we obtain
$$
\sigma=\min_{t\in[a_1,b_1]}u(t)> M_1\inf_{t\in[a_1,b_1]}\int_{a_1}^{b_1}
k_1(t,s)u(s)\,ds \geq M_1\sigma \inf_{t\in[a_1,b_1]}\int_{a_1}^{b_1}k_1(t,s)\,ds
=\sigma,
$$
a contradiction.
\end{proof}

\end{document}